\newtheorem{theorem}{Theorem}
\newtheorem{lemma}[theorem]{Lemma}
\newtheorem{definition}[theorem]{Definition}
\newtheorem{proposition}[theorem]{Proposition}
\newtheorem*{theorem*}{Theorem}
\newtheorem{corollary}[theorem]{Corollary}
\newtheorem*{conjecture*}{Conjecture}
\DeclareMathOperator{\sign}{sign}
\title{A vector-contraction inequality for Rademacher complexities using $p$-stable variables}
\author{Oscar Zatarain-Vera}
\address{Department of Mathematical Sciences, Kent State University, Kent, OH 44242, USA}
\email{ozatarai@kent.edu}
\begin{document}
    \maketitle

    \begin{abstract}
        Andreas Maurer in the paper ``A vector-contraction inequality for Rademacher complexities'' \cite{Maurer} extended the contraction inequality for Rademacher averages to Lipschitz functions with vector-valued domains; He did it replacing the Rademacher variables in the bounding expression  by arbitrary idd symmetric and sub-gaussian variables. We will see how to extend this work when we replace sub-gaussian variables by $p$-stable variables for $1<p< 2$.
    \end{abstract}
    \vspace{1cm}

    The main result of this work is based in the following class of random variables:

    \begin{definition}
        A real valued symmetric random variable $X$ is called $p-$stable, $0<p\leq 2$ if for some $\sigma\geq 0$, its Fourier transform (characteristic function) is of the form
        \[
            \mathbb{E}\exp(itX)=\exp(-\sigma^{p}|t|^p/2).
        \]
    \end{definition}

    The $p-$stable random variables are characterized by their fundamental ``stability'' property: if $(X_i)$ is a $p-$stable sequence, i.e. a sequence of independent $p-$stable random variables $X_i$, then for any finite sequence $(\alpha_i)$ of real numbers, $\sum_i \alpha_i X_i$ has the same distribution as
    $\left(\sum_i|\alpha_i|^p\right)^{1/p}X_1$. In particular, for any $r<p$,
    \begin{equation}\label{p-stability}
        \|\sum_i \alpha_i X_i\|_r=c_{p,r}\left(\sum_i|\alpha_i|^p\right)^{1/p}.
    \end{equation}
    To know more about $p-$stable random variables, see \cite{LeTa}.

    In general, the preceding random variables are part of a larger class which are known as stable distributions. Let's recall some definitions and notation about them that can be found in \cite{Nolan}.

    \begin{definition}
        A random variable $X$ is said to be \emph{stable} if and only if $X\stackrel{d}{=}\gamma Z+\delta$ where $0<\alpha\leq 2,\hspace{0.1cm} -1\leq \beta\leq 1,\hspace{0.1cm} \gamma\neq 0,
        \hspace{0.1cm} \delta\in\mathbb{R}$ and $Z=Z(\alpha,\beta)$ is a random variable with characteristic function
        \begin{equation}\label{first}
            \mathbb{E}\exp(iuZ)=\begin{cases}
                \exp\left(-|u|^{\alpha}[1-i\beta\tan\frac{\pi\alpha}{2}(\sign u)]\right) , \hspace{2cm} \alpha\neq 1\\
                \exp\left(-|u|[1+i\beta\frac{2}{\pi}(\sign u)\log|u|]\right), \hspace{2cm} \alpha=1
            \end{cases}
        \end{equation}
    \end{definition}

    Notice we need 4 parameters to describe a stable random variable, but sometimes a fifth one is used since there could be different parameterizations $S(\alpha,\beta,\gamma,\delta;k)$. The parameters are $\alpha$ for the characteristic factor, $\beta$ for the skewness, $\gamma$ for the scale, $\delta$ for the location and $k$ for possible different parameterizations.

    \begin{definition}
        A random variable $X$ is $S(\alpha,\beta,\gamma,\delta;0)$ if
        \[
            X\stackrel{d}{=}\begin{cases}
                \gamma\left(Z-\beta\tan\frac{\pi\alpha}{2}\right)+\delta, \hspace{2cm} \alpha\neq 1\\
                \gamma Z+\delta, \hspace{4cm} \alpha=1
            \end{cases}
        \]
        where $Z=Z(\alpha,\beta)$ is given by (\ref{first}).
    \end{definition}

    Note that $X$ has the characteristic function
    \[
        \mathbb{E}\exp(iuX)=\begin{cases}
                \exp\left(-\gamma^{\alpha}|u|^{\alpha}[1+i\beta(\tan\frac{\pi\alpha}{2})(\sign u)(|\gamma u|^{1-\alpha})]+i\delta u\right), \hspace{2cm} \alpha\neq 1\\
                \exp\left(-\gamma|u|[1+i\beta\frac{2}{\pi}(\sign u)\log(\gamma|u|)]+i\delta u\right), \hspace{3.5cm} \alpha=1
            \end{cases}
    \]

    One important characteristic of stable random variables is the tail approximation behavior.

    \begin{theorem}\label{tailbehavior}
        Let $X\sim S(\alpha,\beta,\gamma,\delta;0)$ with $0<\alpha<2, \hspace{0.15cm} -1<\beta<1$. Then as $x\to\infty$ we have
        \[
            P(X>x)\sim \gamma^{\alpha}c_{\alpha}(1+\beta)x^{-\alpha}.
        \]
    \end{theorem}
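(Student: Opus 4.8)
The plan is to reduce first to the standardized variable $Z=Z(\alpha,\beta)$ of (\ref{first}) and then to read off the tail from a genuine asymptotic equivalence for its density. Writing $X\stackrel{d}{=}\gamma\left(Z-\beta\tan\tfrac{\pi\alpha}{2}\right)+\delta$ for $\alpha\neq 1$, I would observe that if the density $f_Z$ satisfies $f_Z(x)\sim A\,x^{-\alpha-1}$ as $x\to\infty$, then $f_X(x)=\gamma^{-1}f_Z\!\left(\tfrac{x-\delta}{\gamma}+\beta\tan\tfrac{\pi\alpha}{2}\right)$, and since the argument tends to $+\infty$ the affine shift and scale collapse to leading order, giving $f_X(x)\sim\gamma^{\alpha}A\,x^{-\alpha-1}$. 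Integrating this equivalence over $[x,\infty)$ then yields $P(X>x)\sim(\gamma^{\alpha}A/\alpha)\,x^{-\alpha}$. Working at the level of a true density asymptotic avoids any circularity, so the whole problem reduces to the density asymptotics of $Z$ and the identification of $A$.

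For the density of $Z$ I would use Fourier inversion. Since $|\mathbb{E}e^{iuZ}|=e^{-|u|^{\alpha}}$ is integrable, $f_Z$ exists and, using the symmetry $\phi(-u)=\overline{\phi(u)}$,
\[
    \pi f_Z(x)=\operatorname{Re}\int_0^\infty e^{-ixu}\,e^{-\omega u^\alpha}\,du,
    \qquad \omega=1-i\beta\tan\tfrac{\pi\alpha}{2}.
\]
The key observation is that the power-law tail of $f_Z$ at $+\infty$ is governed entirely by the non-analytic behavior of $g(u)=e^{-\omega u^\alpha}$ at the origin. Expanding $g(u)=1-\omega u^\alpha+O(u^{2\alpha})$, the analytic part contributes nothing to the power-law tail (its Fourier content is concentrated at the origin), while the leading algebraic term $-\omega u^\alpha$ produces the dominant decay.

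Concretely, I would invoke the regularized identity $\int_0^\infty u^{\alpha}e^{-ixu}\,du=\Gamma(\alpha+1)(ix)^{-\alpha-1}$ for $x>0$ (by analytic continuation of $\int_0^\infty u^\alpha e^{-\lambda u}\,du=\Gamma(\alpha+1)\lambda^{-\alpha-1}$ to $\lambda=ix$, with $ix=xe^{i\pi/2}$). Applying it to $-\omega u^\alpha$ and taking real parts, a short computation gives $\operatorname{Re}\!\left[-\omega(ix)^{-\alpha-1}\right]\Gamma(\alpha+1)=\Gamma(\alpha+1)(1+\beta)\sin\tfrac{\pi\alpha}{2}\,x^{-\alpha-1}$, where the factor $1+\beta$ emerges from $\operatorname{Re}\!\left[(1-i\beta\tan\tfrac{\pi\alpha}{2})(\sin\tfrac{\pi\alpha}{2}+i\cos\tfrac{\pi\alpha}{2})\right]=(1+\beta)\sin\tfrac{\pi\alpha}{2}$. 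Hence
\[
    f_Z(x)\sim\frac{\Gamma(\alpha+1)\sin\tfrac{\pi\alpha}{2}}{\pi}\,(1+\beta)\,x^{-\alpha-1},
\]
so $A=\Gamma(\alpha+1)\sin(\pi\alpha/2)(1+\beta)/\pi$, and the reduction and integration above produce $P(X>x)\sim\gamma^{\alpha}c_\alpha(1+\beta)x^{-\alpha}$ with $c_\alpha=\Gamma(\alpha)\sin(\pi\alpha/2)/\pi$. The boundary case $\alpha=1$ is treated by the same inversion scheme using the logarithmic characteristic exponent of (\ref{first}).

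The main obstacle is making the extraction of the leading term rigorous rather than formal. I expect most of the effort to go into justifying that the smooth part of $g$ at the origin really contributes $o(x^{-\alpha-1})$, and into controlling the remainder $g(u)+\omega u^\alpha-1=O(u^{2\alpha})$. The clean route is a Watson's-lemma argument obtained by rotating the contour of integration off the real axis so that $e^{-ixu}$ becomes a genuinely decaying exponential; this requires checking that $\operatorname{Re}(\omega u^\alpha)$ stays positive along the rotation, which for $1<\alpha<2$ (where $\tan\tfrac{\pi\alpha}{2}<0$) demands care with the admissible rotation angle and the branch of $u^\alpha$. Getting these analyticity and branch-cut details right, together with uniform control of the tail of the $u$-integral, is where the real work lies; the constant-chasing that produces $(1+\beta)$ and $c_\alpha$ is then routine.
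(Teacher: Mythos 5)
Your proposal cannot be compared against a proof in the paper, because the paper does not prove Theorem \ref{tailbehavior}: it is quoted as background from Nolan's book \cite{Nolan}, so your argument stands or falls on its own. The route you chose is the classical one for stable laws (Fourier inversion plus extraction of the leading non-analytic term of the characteristic function at the origin), and your constant-chasing is correct: $\operatorname{Re}\left[(1-i\beta\tan\frac{\pi\alpha}{2})(\sin\frac{\pi\alpha}{2}+i\cos\frac{\pi\alpha}{2})\right]=(1+\beta)\sin\frac{\pi\alpha}{2}$ holds for all $0<\alpha<2$ (including $1<\alpha<2$, where $\tan\frac{\pi\alpha}{2}$ and $\cos\frac{\pi\alpha}{2}$ are both negative), the $O(u^{2\alpha})$ remainder contributes $O(x^{-2\alpha-1})=o(x^{-\alpha-1})$, and $A/\alpha$ gives exactly $c_\alpha=\Gamma(\alpha)\sin(\pi\alpha/2)/\pi$, which is Nolan's normalization of $c_\alpha$. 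The affine reduction from $X$ to $Z$ and the integration of the density asymptotic over $[x,\infty)$ are both sound. One sharpening: the correct reason the constant term of $g$ contributes nothing is not that its ``Fourier content is concentrated at the origin,'' but that its regularized transform $(ix)^{-1}=-i/x$ is purely imaginary and so vanishes upon taking real parts; as written, that step (like the identity $\int_0^\infty u^\alpha e^{-ixu}\,du=\Gamma(\alpha+1)(ix)^{-\alpha-1}$, which does not converge even improperly for $\alpha\geq 1$) only makes sense after the contour rotation you postpone.

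Two genuine gaps remain. First, the analytic core --- rotating the contour so that Watson's lemma applies, verifying $\operatorname{Re}(\omega u^{\alpha})\geq 0$ in the relevant sector, and controlling the remainder uniformly --- is exactly what you defer, so what you have is a correct plan with correct constants rather than a complete proof; note the rotation does go through, since $\operatorname{Re}\omega=1$ forces $\arg\omega\in(-\pi/2,\pi/2)$, which leaves room to choose an admissible angle $\theta$ with $|\arg\omega-\alpha\theta|<\pi/2$. Second, and more seriously, the theorem as stated covers $\alpha=1$, and your one-sentence dismissal of that case does not work ``by the same scheme'': there the characteristic exponent contains the term $u\log u$, so the expansion at the origin is no longer a power series in $u^{\alpha}$, the regularized-transform identity you invoke has no direct analogue, and even your reduction step changes (for $\alpha=1$ one has $X\stackrel{d}{=}\gamma Z+\delta$, with the skewness entering through a logarithmic drift). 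The case $\alpha=1$, $\beta\neq 0$ needs a separate argument, and omitting it leaves the stated theorem unproved on part of its hypothesis range.
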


    Two other properties of stable random variables are how they behave under scalar multiplication and under addition.

    \begin{theorem}[See \cite{Nolan}]\label{properties}
        The $S(\alpha,\beta,\gamma,\delta;0)$ parametrization has the following properties.
        \begin{enumerate}
            \item if $X\sim S(\alpha,\beta,\gamma,\delta;0)$, then for any $a\neq 0, b\in \mathbb{R}$,
            \[
                aX+b \sim S(\alpha,(\sign a)\beta,|a|\gamma,a\delta +b;0).
            \]
            \item The characteristic functions, densities and distribution functions are jointly continuous in all four parameters $(\alpha,\beta,\gamma,\delta)$ and in $x$.
            \item If $X_1 \sim S(\alpha,\beta_1,\gamma_1,\delta_1;0)$ and $X_2 \sim S(\alpha,\beta_2,\gamma_2,\delta_2;0)$ are independent, then $X_1+X_2 \sim S(\alpha,\beta,\gamma,\delta;0)$ where
                \[
                    \beta=\frac{\beta_1\gamma_1^{\alpha}+\beta_2\gamma_2^{\alpha}}{\gamma_1^{\alpha}+\gamma_2^{\alpha}}, \hspace{1cm} \gamma^{\alpha}=\gamma_1^{\alpha}+\gamma_2^{\alpha},
                \]
                \[
                \delta=\begin{cases}
                \delta_1+\delta_2+(\tan\frac{\pi\alpha}{2})[\beta\gamma-\beta_1\gamma_1-\beta_2\gamma_2] , \hspace{3.5cm} \alpha\neq 1\\
                \delta_1+\delta_2+\frac{2}{\pi}[\beta\gamma\log\gamma-\beta_1\gamma_1\log\gamma_1-\beta_2\gamma_2\log\gamma_2], \hspace{2cm} \alpha=1
            \end{cases}
            \]
        \end{enumerate}
    \end{theorem}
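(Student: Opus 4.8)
The plan is to prove all three parts by working entirely with characteristic functions and invoking their uniqueness, since the $S(\alpha,\beta,\gamma,\delta;0)$ parametrization is \emph{defined} through (\ref{first}) and the explicit characteristic function displayed just before Theorem~\ref{tailbehavior}. For part (1), I would compute $\mathbb{E}\exp(iu(aX+b))=\exp(iub)\,\mathbb{E}\exp(i(au)X)$ and substitute $au$ in place of $u$ in the known characteristic function of $X$. The elementary identities $|au|^{\alpha}=|a|^{\alpha}|u|^{\alpha}$ and $\sign(au)=(\sign a)(\sign u)$ then let me collect the resulting exponent into the canonical form: the modulus term produces scale $\gamma'=|a|\gamma$, the skewness term picks up the factor $\sign a$ giving $\beta'=(\sign a)\beta$, and the factor $\exp(iub)$ together with the linear part yields location $\delta'=a\delta+b$. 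By uniqueness of characteristic functions this identifies the law of $aX+b$. The one delicate point is the case $\alpha=1$, where $\log(\gamma|au|)=\log(\gamma|u|)+\log|a|$ produces an extra term linear in $u$ that must be absorbed into the location parameter; keeping track of this term is the only real bookkeeping in part (1).

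For part (3), the characteristic function of a sum of independent variables is the product of the characteristic functions, so I would simply multiply the two canonical expressions, which share the common exponent $\alpha$. Grouping the purely real contributions gives $-(\gamma_1^{\alpha}+\gamma_2^{\alpha})|u|^{\alpha}$, forcing $\gamma^{\alpha}=\gamma_1^{\alpha}+\gamma_2^{\alpha}$; grouping the skewness contributions and re-expressing them in canonical form forces $\beta$ to be the $\gamma^{\alpha}$-weighted average $(\beta_1\gamma_1^{\alpha}+\beta_2\gamma_2^{\alpha})/(\gamma_1^{\alpha}+\gamma_2^{\alpha})$. The subtlety is that the product of two canonical forms is \emph{not} immediately canonical: a residual term linear in $u$ survives, and folding it into the location parameter is exactly what produces the correction terms involving $\tan\frac{\pi\alpha}{2}$ (for $\alpha\neq 1$) and the logarithmic terms (for $\alpha=1$) in the stated formula for $\delta$. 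Matching these residual linear terms against the canonical location term gives the displayed expression for $\delta$.

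Part (2) is where the main difficulty lies. Joint continuity of the characteristic functions in $(\alpha,\beta,\gamma,\delta)$ and $u$ is immediate from the explicit formula on every region with $\alpha\neq 1$, and the genuine work is to show that the $\alpha\neq 1$ branch converges to the $\alpha=1$ branch as $\alpha\to 1$: here $\tan\frac{\pi\alpha}{2}$ diverges, but the $0$-parametrization is constructed precisely so that this divergence cancels, leaving in the limit the logarithmic term of the $\alpha=1$ characteristic function. I would establish this by the expansion $|\gamma u|^{1-\alpha}=1+(1-\alpha)\log(\gamma|u|)+o(1-\alpha)$ and showing its product with $\tan\frac{\pi\alpha}{2}\sim\frac{2}{\pi(1-\alpha)}$ tends to the correct finite limit. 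Continuity of the densities and distribution functions is then deduced from continuity of the characteristic functions via the Fourier inversion formula $f(x)=\frac{1}{2\pi}\int e^{-iux}\,\mathbb{E}\exp(iuX)\,du$, using the stretched-exponential bound $|\mathbb{E}\exp(iuX)|=\exp(-\gamma^{\alpha}|u|^{\alpha})$ to produce an integrable dominating function that is uniform over compact sets of parameters with $\alpha$ bounded away from $0$ and $\gamma$ bounded away from $0$, so that dominated convergence permits passing the limit inside the integral. I expect this uniform domination near the boundary $\alpha=1$, together with the gluing of the two branches there, to be the main obstacle; everything else reduces to the algebra of matching exponents.
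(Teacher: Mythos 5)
The paper itself contains no proof of this theorem at all --- it is imported verbatim from Nolan \cite{Nolan} --- so your proposal can only be compared with the standard argument, which is indeed the characteristic-function-plus-uniqueness route you take. For parts (1) and (3) your sketch is correct: multiplying (resp.\ rescaling) the canonical exponents and matching the coefficients of $|u|^{\alpha}$, $(\sign u)|u|^{\alpha}$ and $u$ does produce the stated $\beta$, $\gamma$, $\delta$, and you rightly identify the residual linear-in-$u$ term as the source of the $\tan\frac{\pi\alpha}{2}$ (resp.\ logarithmic) correction in $\delta$. One caution in part (1) at $\alpha=1$: the extra term coming from $\log(\gamma|au|)=\log(\gamma|u|)+\log|a|$ should \emph{not} be pushed into the location parameter; it recombines with the modulus factor to form $\log\bigl((|a|\gamma)|u|\bigr)$, i.e.\ it is absorbed by the new scale, and the location comes out exactly $a\delta+b$. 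Treating it as a location correction is what happens in the $1$-parametrization, and doing so here would contradict the clean formula you are trying to prove.

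The genuine gap is in part (2), and it traces to a typo in the paper that your sketch partially inherits. The characteristic function displayed just before Theorem \ref{tailbehavior} reads $[1+i\beta(\tan\frac{\pi\alpha}{2})(\sign u)|\gamma u|^{1-\alpha}]$, but the correct $0$-parametrization --- as you can verify by computing the characteristic function directly from the definition $X\stackrel{d}{=}\gamma(Z-\beta\tan\frac{\pi\alpha}{2})+\delta$ with $Z$ given by \eqref{first} --- carries the factor $(|\gamma u|^{1-\alpha}-1)$. With the ``$-1$'' one has $\tan\frac{\pi\alpha}{2}\,(|\gamma u|^{1-\alpha}-1)\to\frac{2}{\pi}\log(\gamma|u|)$ as $\alpha\to 1$, which is exactly the cancellation you invoke. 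Without it, the quantity you propose to control, namely the product of $|\gamma u|^{1-\alpha}=1+(1-\alpha)\log(\gamma|u|)+o(1-\alpha)$ with $\tan\frac{\pi\alpha}{2}\sim\frac{2}{\pi(1-\alpha)}$, equals $\tan\frac{\pi\alpha}{2}+\frac{2}{\pi}\log(\gamma|u|)+o(1)$ and diverges; joint continuity at $\alpha=1$ is simply false for the formula as printed. So to make part (2) a proof you must first rederive the characteristic function with the $(|\gamma u|^{1-\alpha}-1)$ factor and only then run your expansion. The remainder of your part (2) is sound: $|\mathbb{E}\exp(iuX)|=\exp(-\gamma^{\alpha}|u|^{\alpha})$ gives an integrable dominating function, locally uniform once $\alpha$ and $\gamma$ are bounded away from $0$ on compact parameter sets, so Fourier inversion plus dominated convergence yields joint continuity of the densities, and the distribution functions follow (e.g.\ via Scheff\'e's theorem).
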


    The formula $\gamma^{\alpha}=\gamma_1^{\alpha}+\gamma_2^{\alpha}$ in the third part is the generalization of the rule for adding variances of independent random variables: $\sigma^2=\sigma_1^2+\sigma_2^2$. Note that one adds the $\alpha^{th}$ power of the scale parameters, not the scale parameters themselves.

    We will focus in a particular case of a stable random variable $S(\alpha,\beta,\gamma,\delta;0)$.
    Consider the stable random variable $X=S(p,0,\frac{\sigma}{\sqrt[p]{2}},0;0)$. Since its characteristic function is
    \[
        \mathbb{E}\exp(itX)=\exp\left(-(\frac{\sigma}{\sqrt[p]{2}})^{p}|t|^p\right)=\exp(-\sigma^{p}|t|^p/2).
    \]
    then we will refer to $X$ as a $p-$stable random variable. Recall that the sum of $p-$stable random variables is again $p-$stable. We will emphasize the parameters of the stable random variable $S$ in question whenever it's needed.
    Also, by Theorem \ref{tailbehavior} as $t\to\infty$ we have $P(X>t)\sim \sigma^p c_p t^{-p}$.\\

    In Machine Learning the so-called contraction inequality is widely used. A function $h:\mathbb{R}\to\mathbb{R}$
    is called Lipschitz with constant $L>0$ if for all $x$ and $y$ we have $|h(x)-h(y)|\leq L|x-y|$. For Lipschitz functions $h_i:\mathbb{R}\to\mathbb{R}$ with constant $L$, the scalar contraction inequality (\cite{LeTa, Mohri}) states that
    \begin{equation}\label{contraction}
        \mathbb{E}\sup_{f\in\mathcal{F}}\sum_{i=1}^{n}\epsilon_i h_i(f(x_i))\leq L\mathbb{E}\sup_{f\in\mathcal{F}}\sum_{i=1}^{n}\epsilon_i f(x_i).
    \end{equation}

    Andreas Maurer in \cite{Maurer} extended the contraction inequality to Lipschitz functions with vector domains. Furthermore he also showed that in the bounding expression the Rademacher variables can be replaced by arbitrary iid symmetric and subgaussian variables. Specifically he proved:
     \begin{theorem}[Maurer A., Vector contraction-inequality for subgaussian variables]
        Let $X$ be a nontrivial, symmetric and subgaussian random variable. Then there exists a constant $C<\infty$, depending only on the distribution of $X$, such that for any set $S$ and functions $\psi_i:S\to \mathbb{R}, \phi_i:S\to\ell_2$, $1\leq i\leq n$ satisfying
        \[
            \forall s,s'\in S, \psi_i(s)-\psi_i(s')\leq \|\phi_i(s)-\phi_i(s')\|_2
        \]
        we have
        \[
            \mathbb{E}\sup_{s\in S}\sum_i\epsilon_i\psi_i(s)\leq C\mathbb{E}\sup_{s\in S}\sum_{i,k}X_{ik}\phi_i(s)_k
        \]
        where the $X_{ik}$ are independent copies of $X$ for $1\leq i\leq n$ and $1\leq k\leq \infty$ and $\phi_i(s)_k$ is the $k-th$ coordinate of $\phi_i(s)$.
    \end{theorem}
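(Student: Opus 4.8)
The plan is to prove the inequality with an explicit constant of the form $C=c_0/\mathbb{E}|X|$ by splitting it into two essentially independent reductions: a soft ``magnitude'' reduction that turns the subgaussian average on the right into a pure Rademacher average, and a genuine vector-valued contraction that trades the $\phi_i$ for the $\psi_i$. Throughout I exploit the symmetry of $X$ to write $X_{ik}\stackrel{d}{=}\eta_{ik}\,\xi_{ik}$, where the $\eta_{ik}$ are Rademacher variables independent of the nonnegative magnitudes $\xi_{ik}:=|X_{ik}|$, and I abbreviate $\eta_{i\cdot}=(\eta_{ik})_k$.

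\emph{First reduction (magnitudes).} Since $\xi\mapsto\sup_s\sum_{i,k}\eta_{ik}\xi_{ik}\phi_i(s)_k$ is a supremum of functions linear in $\xi$, it is convex, so Jensen's inequality gives, for each fixed sign pattern $\eta$,
\[
    \mathbb{E}_{\xi}\sup_{s}\sum_{i,k}\eta_{ik}\,\xi_{ik}\,\phi_i(s)_k\ \ge\ \sup_{s}\sum_{i,k}\eta_{ik}\,\mathbb{E}[\xi_{ik}]\,\phi_i(s)_k\ =\ \mathbb{E}|X|\,\sup_{s}\sum_{i,k}\eta_{ik}\,\phi_i(s)_k .
\]
Averaging over $\eta$ yields $\mathbb{E}\sup_s\sum_{i,k}X_{ik}\phi_i(s)_k\ge \mathbb{E}|X|\cdot\mathbb{E}\sup_s\sum_{i,k}\eta_{ik}\phi_i(s)_k$. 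This is exactly where the two hypotheses on $X$ are used: nontriviality gives $\mathbb{E}|X|>0$, while subgaussianity gives $\mathbb{E}|X|<\infty$ and integrability of the right-hand supremum, so that $C$ is finite and the statement is not vacuous. It then suffices to bound the target $\mathbb{E}\sup_s\sum_i\epsilon_i\psi_i(s)$ by $c_0\,\mathbb{E}\sup_s\sum_{i,k}\eta_{ik}\phi_i(s)_k$.

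\emph{Second reduction (vector contraction).} I would establish this block by block, in the spirit of the scalar contraction principle. The aim is the single-block replacement: for an arbitrary bounded $a:S\to\mathbb{R}$ (which absorbs every other block after conditioning on its randomness),
\[
    \mathbb{E}_{\epsilon}\sup_{s}\bigl[a(s)+\epsilon\,\psi_i(s)\bigr]\ \le\ \mathbb{E}_{\eta_{i\cdot}}\sup_{s}\bigl[a(s)+c_0\,\langle\eta_{i\cdot},\phi_i(s)\rangle\bigr].
\]
If this holds for every $i$ and every $a$, then conditioning on all blocks but $i$ and iterating over $i=1,\dots,n$ converts $\sum_i\epsilon_i\psi_i$ into $c_0\sum_{i,k}\eta_{ik}\phi_i(s)_k$, with the constant $c_0$ appearing only \emph{once} because it rescales each $\phi_i$ uniformly. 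The link hypothesis $\psi_i(s)-\psi_i(s')\le\|\phi_i(s)-\phi_i(s')\|_2$ is precisely what feeds this comparison.

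The single-block inequality is where I expect the real difficulty, and it is the technical heart of the argument. The naive approach---rewriting the left side as $\tfrac12\sup_{s,s'}[a(s)+a(s')+\psi_i(s)-\psi_i(s')]$, invoking the link to replace $\psi_i(s)-\psi_i(s')$ by $\|\phi_i(s)-\phi_i(s')\|_2$, and symmetrising the right side via $\eta_{i\cdot}\mapsto-\eta_{i\cdot}$---breaks down, because pulling the sign-average inside the supremum over pairs annihilates the linear term $\langle\eta_{i\cdot},\phi_i(s)-\phi_i(s')\rangle$ and loses exactly the norm one needs. Unlike a Gaussian vector, a Rademacher vector obeys no clean identity such as $\mathbb{E}\max(\langle g,v\rangle,\langle g,w\rangle)=\|v-w\|/\sqrt{2\pi}$ that would regenerate the Euclidean norm from increments, and increment domination alone does \emph{not} imply domination of suprema for non-Gaussian processes. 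Overcoming this requires keeping the supremum intact and recovering $\|\phi_i(s)-\phi_i(s')\|_2$ from the vector Rademacher increments through a Khintchine-type lower bound applied at the level of the whole process rather than pairwise; I anticipate $c_0=\sqrt2$. Finally, the same two-stage scheme should carry the $p$-stable extension: the magnitude reduction survives because $\mathbb{E}|X|<\infty$ for $1<p<2$, while the Euclidean link and the Gaussian increment identity are replaced by the $\ell_p$ scaling $\|\sum_i\alpha_iX_i\|_r=c_{p,r}\bigl(\sum_i|\alpha_i|^p\bigr)^{1/p}$ of \eqref{p-stability}, valid for $r<p$.
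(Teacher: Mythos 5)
Your first reduction is sound: by symmetry $X_{ik}\stackrel{d}{=}\eta_{ik}|X_{ik}|$ with signs independent of magnitudes, and conditional Jensen does give $\mathbb{E}\sup_s\sum_{i,k}X_{ik}\phi_i(s)_k\ge\mathbb{E}|X|\cdot\mathbb{E}\sup_s\sum_{i,k}\eta_{ik}\phi_i(s)_k$. But this only moves the goalposts: what remains is exactly the Rademacher vector-contraction inequality, which is the substance of the theorem, and that is the part you do not prove. Your single-block inequality is stated, declared to be the technical heart, and then left open, with only the guess $c_0=\sqrt2$ and a gesture toward ``a Khintchine-type lower bound applied at the level of the whole process.'' That is a genuine gap, not a routine omission.

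Moreover, your diagnosis of the difficulty is off: the pairwise argument you dismiss as naive is essentially the correct one, and what makes it work is the order of operations, which your sketch scrambles. In the proof this paper adapts (its Lemma and Theorem \ref{p-theorem}, mirroring Maurer's argument in the subgaussian case), one writes $2\,\mathbb{E}_\epsilon\sup_s[\epsilon\psi(s)+f(s)]=\sup_{s_1,s_2}[\psi(s_1)-\psi(s_2)+f(s_1)+f(s_2)]$, picks a \emph{fixed} near-optimal pair $(s_1^*,s_2^*)$, and only then applies the norm-recovery inequality $\|v\|_2\le C\,\mathbb{E}\bigl|\sum_k v_k X_k\bigr|$ (the $L^1$ Khintchine lower bound in the Rademacher case; Proposition \ref{prop}(ii) in the $p$-stable case) to the deterministic increment $v=\phi(s_1^*)-\phi(s_2^*)$. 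The resulting quantity $\mathbb{E}\bigl|\sum_k Y_k(\phi(s_1^*)_k-\phi(s_2^*)_k)\bigr|$, evaluated at fixed points, is then dominated by the expectation of the supremum over all pairs; the absolute value is removed by the $s_1\leftrightarrow s_2$ exchange symmetry, and the supremum splits into two equal halves by symmetry of the $Y_k$. Nothing is ``annihilated,'' because the sign-average is never pulled inside the supremum: Khintchine is applied at deterministic points \emph{before} the supremum is reintroduced inside the expectation, so no process-level comparison and no Gaussian identity $\mathbb{E}\max(\langle g,v\rangle,\langle g,w\rangle)=\|v-w\|/\sqrt{2\pi}$ is ever needed --- a one-sided pointwise lower bound suffices. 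With that lemma in hand, your blockwise induction is the same as the one in Theorem \ref{p-theorem} and closes the argument; without it, your proposal proves only the easy comparison between the subgaussian average and the Rademacher average.
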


    Let's see why the preceding is an extension of the contraction inequality. For applications in learning theory, we can do at once:
    \begin{itemize}
        \item Replace a Rademacher variable for $X$ and $\sqrt{2}$ for $C$.
        \item For $S$ we take a class $\mathcal{F}$ of vector valued functions $f:\mathcal{X} \to \ell_2 $.
        \item For the $\phi_i$ the evaluation functionals on a sample $(x_1,...,x_n)$, such that $\phi_i(f)=f(x_i)$.
        \item For $\psi_i$ we take the evaluation functionals composed with a Lipschitz loss function $h:\ell_2\to\mathbb{R}$ of Lipschitz norm $L$.
    \end{itemize}

    In such a way, we get
    \begin{corollary}
        Let $\mathcal{X}$ be any set, $(x_1,...,x_n)\in\mathcal{X}^n$, let $\mathcal{F}$ be a class of functions $f:\mathcal{X}\to\ell_2$ and let $h_i:\ell_2\to\mathbb{R}$ have Lipschitz norm $L$. Then
        \[
            \mathbb{E} \sup_{f\in \mathcal{F}}\sum_i \epsilon_i h_i(f(x_i)) \leq \sqrt{2}L \mathbb{E} \sup_{f\in\mathcal{F}} \sum_{i,k} \epsilon_{ik}f_k(x_i),
        \]
        where $\epsilon_{ik}$ is an independent doubly indexed Rademacher sequence and $f_k(x_i)$ is the $k-$th component of $f(x_i)$.
    \end{corollary}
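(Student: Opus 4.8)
The plan is to specialize the subgaussian vector-contraction theorem to the Rademacher distribution using the dictionary indicated in the bullet list, while absorbing the Lipschitz constant $L$ through a preliminary normalization. First I would record that a Rademacher variable $\epsilon$, taking the values $\pm 1$ with equal probability, is nontrivial, symmetric, and subgaussian, so it is an admissible choice of $X$ in Maurer's theorem; the distribution-dependent constant $C$ that the theorem produces for this particular law is $\sqrt{2}$, which is the source of the factor in the statement. I would take $S=\mathcal F$ and let the doubly indexed Rademacher array $\epsilon_{ik}$ play the role of the independent copies $X_{ik}$.

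Next I would set $\phi_i(f)=f(x_i)\in\ell_2$ and, crucially, $\psi_i(f)=L^{-1}h_i(f(x_i))$ rather than $h_i(f(x_i))$ itself, so that the hypothesis of the theorem, which is stated for the $1$-Lipschitz normalization, is met. Since $h_i$ has Lipschitz norm $L$, for all $f,f'\in\mathcal F$ we have
\[
\psi_i(f)-\psi_i(f')=L^{-1}\bigl(h_i(f(x_i))-h_i(f'(x_i))\bigr)\le L^{-1}\,L\,\|f(x_i)-f'(x_i)\|_2=\|\phi_i(f)-\phi_i(f')\|_2,
\]
where the one-sided bound is immediate from the two-sided Lipschitz estimate $|h_i(u)-h_i(v)|\le L\|u-v\|_2$. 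Thus the normalized maps $(\psi_i,\phi_i)$ satisfy exactly the contraction hypothesis required by the theorem.

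Applying Maurer's theorem with these choices and with $C=\sqrt2$ then gives
\[
\mathbb{E}\sup_{f\in\mathcal F}\sum_i \epsilon_i\,L^{-1}h_i(f(x_i))\le \sqrt2\;\mathbb{E}\sup_{f\in\mathcal F}\sum_{i,k}\epsilon_{ik}f_k(x_i).
\]
Because $L>0$ is a positive constant, it factors out of the inner sum, and hence out of the supremum and the expectation, so the left-hand side equals $L^{-1}\,\mathbb{E}\sup_{f}\sum_i\epsilon_i h_i(f(x_i))$. Multiplying both sides by $L$ yields the claimed inequality. The only genuinely nonroutine ingredient is pinning the distribution-dependent constant of the theorem to $\sqrt2$ for the Rademacher law, as computed by Maurer; everything else is the normalization bookkeeping and a direct substitution into an already-proved statement.
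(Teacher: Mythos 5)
Your proof is correct and takes essentially the same route as the paper: the paper obtains this corollary by specializing Maurer's subgaussian theorem with $X$ a Rademacher variable, $C=\sqrt{2}$, $S=\mathcal{F}$, $\phi_i(f)=f(x_i)$, and $\psi_i$ the evaluation functional composed with the Lipschitz loss. Your only addition is making explicit the $L^{-1}$ normalization of $\psi_i$ needed to meet the contraction hypothesis and then rescaling at the end, a bookkeeping step the paper leaves implicit.
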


    We adapt Maurer's proof to extend the vector contraction inequality to $p-$stable random variables where $1< p<2$. To do that we go through a series of results.

    \begin{proposition}\label{prop}
        Let $X$ be a nontrivial $p-$stable random variable, $1<p<2$. Let $\textbf{X}=(X_1,...,X_k,...)$ be a sequence of independent copies of $X$. Then
        \begin{enumerate}
            \item[(i)] For every $v\in \ell_p$, the sequence of random variables $Y_k=\sum_{k=1}^{K}v_kX_k$ converges in $L_r$ for $0<r<p$ to a random variable denoted by $\sum_{k=1}^{\infty}v_kX_k$. So the map $v\mapsto \sum_{k=1}^{\infty}v_kX_k$ is a bounded map from $\ell_p$ to $L_r$.
            \item[(ii)] There exists a constant $C(p)<\infty$ such that for every $v\in\ell_p$
                \[
                    \|v\|_{p}\leq C(p) \mathbb{E}\left|\sum_{k=1}^{\infty}v_kX_k\right|.
                \]
        \end{enumerate}
    \end{proposition}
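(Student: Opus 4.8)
The plan is to reduce both parts to the stability identity \eqref{p-stability}, which already encodes exactly the scaling we need; the only genuine work is a completeness argument for the infinite sum and a careful choice of the exponent $r$.

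For part (i), I would fix $r$ with $0<r<p$ and study the partial sums $Y_K=\sum_{k=1}^{K}v_kX_k$. Since $X_1,\dots,X_M$ are independent copies of a $p$-stable variable, each increment $Y_M-Y_K=\sum_{k=K+1}^{M}v_kX_k$ is again $p$-stable, so \eqref{p-stability} gives
\[
    \|Y_M-Y_K\|_r = c_{p,r}\Big(\sum_{k=K+1}^{M}|v_k|^p\Big)^{1/p}.
\]
Because $v\in\ell_p$, the tail sum on the right tends to $0$ as $K,M\to\infty$, so $(Y_K)$ is Cauchy in $L_r$ (for $r\ge 1$ in the norm, and for $0<r<1$ in the complete translation-invariant metric $\mathbb{E}|\,\cdot\,|^r$). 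By completeness of $L_r$ the sequence converges to a limit, which I denote $\sum_{k=1}^{\infty}v_kX_k$. To see that the limit does not depend on $r$, I would use that on a probability space $\|\cdot\|_{r}\le\|\cdot\|_{r'}$ whenever $r\le r'$ (Jensen), so convergence in the larger $L_{r'}$ forces convergence in the smaller $L_r$ to the same almost-sure limit; hence one random variable serves for all $r<p$.

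The boundedness of the map $v\mapsto\sum_k v_kX_k$ then follows by passing to the limit in the norm. Writing $Y:=\sum_k v_kX_k$, the convergence $Y_K\to Y$ in $L_r$ yields $\|Y_K\|_r\to\|Y\|_r$ (reverse triangle inequality for $r\ge1$, subadditivity of $\mathbb{E}|\,\cdot\,|^r$ for $r<1$), while \eqref{p-stability} gives $\|Y_K\|_r=c_{p,r}(\sum_{k=1}^{K}|v_k|^p)^{1/p}\to c_{p,r}\|v\|_p$. Therefore
\[
    \Big\|\sum_{k=1}^{\infty}v_kX_k\Big\|_r = c_{p,r}\,\|v\|_p,
\]
which exhibits the map as bounded from $\ell_p$ into $L_r$ (indeed isometric up to the factor $c_{p,r}$).

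For part (ii) I would simply specialize this identity to $r=1$. This choice is legitimate precisely because $1<p$, so $r=1$ lies in the admissible range $(0,p)$ and $c_{p,1}$ is a finite positive constant (strictly positive since $X$ is nontrivial). Then
\[
    \mathbb{E}\Big|\sum_{k=1}^{\infty}v_kX_k\Big| = \Big\|\sum_{k=1}^{\infty}v_kX_k\Big\|_1 = c_{p,1}\,\|v\|_p,
\]
so the claim holds with $C(p)=1/c_{p,1}$ (in fact as an equality). The only delicate points I anticipate are the completeness bookkeeping for $r<1$ and the observation that $p>1$ is exactly what makes the first absolute moment finite and usable; everything else is a direct consequence of \eqref{p-stability}.
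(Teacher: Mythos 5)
Your proof is correct, and on the convergence question it is actually tighter than the paper's. Both arguments rest on the stability identity \eqref{p-stability}, but they divide the work differently. The paper spends its effort showing, via integration by parts and the tail asymptotics of Theorem \ref{tailbehavior}, that the partial sums have finite $r$-th moments for $r<p$ --- in effect re-deriving the finiteness of the constant $c_{p,r}$ rather than citing it --- and then asserts that the partial-sum identity $\|\sum_{k=1}^{K}v_kX_k\|_r=c_{r,p}\left(\sum_{k=1}^{K}|v_k|^p\right)^{1/p}$ ``shows convergence in $L_r$''; no Cauchy argument is given, which is the weakest point of the paper's proof. You supply precisely that missing step: the same identity applied to the increments $\sum_{k=K+1}^{M}v_kX_k$ gives $L_r$ (quasi-)norm $c_{p,r}\left(\sum_{k=K+1}^{M}|v_k|^p\right)^{1/p}\to 0$ because $v\in\ell_p$, and completeness of $L_r$ (with the metric $\mathbb{E}|\cdot|^r$ when $r<1$, a case the paper glosses over) yields the limit; you also record that the limit is independent of $r$ and that the map is an isometry up to the factor $c_{p,r}$. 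Part (ii) is essentially identical in both treatments: take $r=1$, legitimate exactly because $p>1$, and pass to the limit $K\to\infty$. The trade-off: your argument takes the finiteness and positivity of $c_{p,r}$ for granted from \eqref{p-stability} as stated in the preliminaries (ultimately from Ledoux--Talagrand), whereas the paper's tail-integral computation makes that finiteness self-contained; if you want your proof to stand alone at the same level, add one line verifying $c_{p,r}=\|X_1\|_r<\infty$ for $r<p$ from the tail bound $P(|X|>t)\sim \sigma^p c_p t^{-p}$. What you gain in exchange is a complete and rigorous convergence argument where the paper only gestures at one.
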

    \begin{proof}
        Let $X=S(p,0,\frac{\sigma}{\sqrt[p]{2}},0;0)$ be a $p-$stable random variable where $1<p<2$ and $0<r<p$.
        For any $v\in\ell_p$ note that $\displaystyle\sum_{k=1}^{K}v_kX_k$ is again a $p-$stable distribution $S\left(p,0,\displaystyle\frac{1}{\sqrt[p]{2}}\sigma\left(\displaystyle\sum_{k=1}^{K}|v_k|^p\right)^{1/p},0;0\right) =
        S\left(p,0,\displaystyle\frac{1}{\sqrt[p]{2}}\sigma\|v\|_p,0;0\right)$ due to Theorem \ref{properties}.
        \begin{enumerate}
            \item[(i)] By integration by parts, since $p>r$ it follows that
                \begin{align*}
                    \mathbb{E}\left|\sum_{k=1}^{K}v_k X_k\right|^r&=r\int_{0}^{\infty} t^{r-1}\mathbb{P}\left\{\left|\sum_{k=1}^{K}v_kX_k\right|>t\right\}\,dt\\
                    &=r\int_{0}^{1} t^{r-1}\mathbb{P}\left\{\left|\sum_{k=1}^{K}v_kX_k\right|>t\right\}\,dt
                    +r\int_{1}^{\infty} t^{r-1}\mathbb{P}\left\{\left|\sum_{k=1}^{K}v_kX_k\right|>t\right\}\,dt\\
                    &\leq r\int_{0}^{1}t^{r-1}\,dt + r\int_{1}^{\infty} t^{r-1}c_p\sigma^p \|v\|_p^p t^{-p}\,dt\\
                    &=1+rc_p\sigma^p\|v\|_p^p\int_{1}^{\infty}\frac{1}{t^{1+(p-r)}}\,dt\\
                    &=1+rc_p\sigma^p\|v\|_p^p\frac{1}{-(p-r)}\frac{1}{t^{p-r}}\bigg\rvert_{1}^{\infty}\\
                    &=1+\frac{1}{p-r}rc_p\sigma^p\|v\|_p^p<\infty.
                \end{align*}
                We used in the inequality that $\sum_{k=1}^{K}v_kX_k$ is also a $p-$stable random variable.\\
                Now, by the $p-$stability property (\ref{p-stability}), we also have
                \begin{equation}\label{p-property}
                    \|\sum_{k=1}^{K}v_kX_k\|_{r}=c_{r,p}\left(\sum_{k=1}^{K}|v_k|^p\right)^{1/p}.
                \end{equation}
                which shows convergence in $L_r$. In this way, the map $v\mapsto\sum_{k=1}^{K}v_kX_k$ is a bounded map.\\
                Linearity of the map follows from standard arguments.
            \item[(ii)] Taking $r=1$ in the $p-$stability property (\ref{p-property}) we obtain that
                \[
                    \|v\|_p=\left(\sum_{k=1}^{K}|v_k|^p\right)^{1/p}\leq C_{1,p}\mathbb{E}\left|\sum_{k=1}^{K}v_kX_k\right|,
                \]
                which is the claim when $k\to \infty$.
        \end{enumerate}
    \end{proof}

    The next Lemma and Theorem are the analogs for $p-$stable random variables of the results of Maurer in the case $p=2$.

    \begin{lemma}
        Let $X$ be a nontrivial $p-$stable random variable for $1<p<2$. Then there exists a constant $C(p)<\infty$ such that for any set $S$ and functions $\psi:S\to\mathbb{R}, \phi:S\to\ell_p$ and $f:S\to\mathbb{R}$ satisfying:
        \[
            \forall s,s'\in S, \psi(s)-\psi(s')\leq \|\phi(s)-\phi(s')\|_{p}
        \]
        we have
        \[
            \mathbb{E}\sup_{s\in S}\epsilon\psi(s)+f(s)\leq C(p)\mathbb{E}\sup_{s\in S}\sum_{k=1}^{K}X_k\phi(s)_k+f(s)
        \]
        where the $X_k$ are independent copies of $X$ for $1\leq k\leq \infty$ and $\phi(s)_k$ is the $k-th$ coordinate of $\phi(s)$.
    \end{lemma}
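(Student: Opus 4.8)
The plan is to adapt Maurer's single-coordinate argument \cite{Maurer}, the only genuinely new ingredient being that the $\ell_2$/subgaussian comparison is replaced by the $\ell_p$/$p$-stable comparison of Proposition \ref{prop}(ii). Throughout I write $g(s):=\sum_{k=1}^{\infty}X_k\phi(s)_k$, which by Proposition \ref{prop}(i) is a well-defined element of $L_r$ for every $0<r<p$; since $1<p$ I may in particular take $r=1$, so each $g(s)\in L_1$ and $\mathbb{E}|g(s)|<\infty$. This is the one place where the hypothesis $p>1$ is genuinely used, and it is exactly what fails for $p\le 1$. As usual I assume $S$ is such that the suprema below are measurable and the expectations well defined.

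First I would carry out the expectation over the single Rademacher sign $\epsilon$. Writing out its two equally likely values gives
\[
\mathbb{E}_{\epsilon}\sup_{s\in S}\bigl(\epsilon\psi(s)+f(s)\bigr)=\tfrac12\sup_{s}\bigl(\psi(s)+f(s)\bigr)+\tfrac12\sup_{s'}\bigl(-\psi(s')+f(s')\bigr)=\tfrac12\sup_{s,s'}\bigl(\psi(s)-\psi(s')+f(s)+f(s')\bigr).
\]
Now I invoke the hypothesis $\psi(s)-\psi(s')\le\|\phi(s)-\phi(s')\|_p$ together with Proposition \ref{prop}(ii) applied to the vector $v=\phi(s)-\phi(s')\in\ell_p$, which yields $\|\phi(s)-\phi(s')\|_p\le C(p)\,\mathbb{E}\bigl|g(s)-g(s')\bigr|$. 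This turns the right-hand side into $\tfrac12\sup_{s,s'}\bigl(C(p)\,\mathbb{E}|g(s)-g(s')|+f(s)+f(s')\bigr)$.

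The heart of the argument is to decouple $s$ and $s'$. Since $f$ is deterministic I rewrite the bracket as a single expectation and use $\sup\mathbb{E}\le\mathbb{E}\sup$ to pull the supremum inside:
\[
\tfrac12\sup_{s,s'}\mathbb{E}\bigl(C(p)|g(s)-g(s')|+f(s)+f(s')\bigr)\le\tfrac12\,\mathbb{E}\sup_{s,s'}\bigl(C(p)|g(s)-g(s')|+f(s)+f(s')\bigr).
\]
For each fixed realization of $(X_k)$ the integrand is the maximum of the ordered expression $C(p)(g(s)-g(s'))+f(s)+f(s')$ and of its swap; because the extra term $f(s)+f(s')$ is symmetric in $(s,s')$, this maximum is absorbed by the supremum over the ordered pair, so the absolute value may simply be dropped. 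The supremum then splits as $\sup_s(C(p)g(s)+f(s))+\sup_{s'}(-C(p)g(s')+f(s'))$. Finally the symmetry of $X$ gives $(g(s))_{s}\stackrel{d}{=}(-g(s))_{s}$, so the two terms have equal expectation; the resulting factor $2$ cancels the $\tfrac12$, leaving $\mathbb{E}\sup_s\bigl(C(p)\sum_k X_k\phi(s)_k+f(s)\bigr)$, which is the claim, the constant $C(p)$ multiplying the $p$-stable sum so that it factors out across coordinates in the eventual inductive proof of the full theorem.

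The step I expect to be the main obstacle is this decoupling: correctly justifying that the coupled quantity $\mathbb{E}|g(s)-g(s')|$ can be replaced, after moving the supremum inside, by a sum of two separate suprema, via (a) absorbing the absolute value into the supremum over ordered pairs and (b) the sign-flip symmetry of the $X_k$. One must also keep track that $g(s)$ is a bona fide integrable random variable — guaranteed by Proposition \ref{prop}(i) with $r=1<p$ — so that the interchange $\sup\mathbb{E}\le\mathbb{E}\sup$ and the symmetry argument are legitimate; this integrability is precisely the point at which the restriction $1<p$ enters.
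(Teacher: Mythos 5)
Your proof is correct and takes essentially the same route as the paper's: both reduce $\mathbb{E}_{\epsilon}\sup_s(\epsilon\psi(s)+f(s))$ to a supremum over pairs, apply the hypothesis together with Proposition \ref{prop}(ii), exchange supremum and expectation, drop the absolute value using the invariance of $f(s)+f(s')$ under swapping $s$ and $s'$, and split the supremum via the sign-symmetry of the $X_k$. The only differences are presentational: you invoke $\sup\mathbb{E}\le\mathbb{E}\sup$ directly where the paper uses a $\delta$-approximate maximizer, and you carry the constant $C(p)$ explicitly where the paper absorbs it by setting $Y_k=C(p)X_k$.
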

    \begin{proof}
        We take the constant in proposition \ref{prop} as $C(p)$  and let $Y=C(p)X$ and $Y_k=C(p)X_k$. So for every $v\in \ell_p$
        \[
            \|v\|_p\leq \mathbb{E}\left|\sum_{k}v_kY_k\right|.
        \]
        Let $\delta>0$ arbitrary. Then there exists $s_1^*$ and $s_2^*$ such that
        \begin{align*}
            2\mathbb{E}\sup_{s\in S}(\epsilon \psi(s)+f(s))-\delta&=\sup_{s_1,s_2\in S}\psi(s_1)+f(s_1)-\psi(s_2)+f(s_2)-\delta\\
            &\leq \psi(s_1^*)-\psi(s_2^*)+f(s_1^*)+f(s_2^*)\\
            &\leq \|\phi(s_1^*)-\phi(s_2^*)\|_p+f(s_1^*)+f(s_2^*)\\
            &\leq \mathbb{E}\left|\sum_kY_k(\phi(s_1^*)_k-\phi(s_2^*)_k)\right|+f(s_1^*)+f(s_2^*)\\
            &\leq \mathbb{E}\sup_{s_1,s_2\in S}\left|\sum_k Y_k \phi(s_1)_k-\sum_k Y_k\phi(s_2)_k\right|+f(s_1)+f(s_2)\\
        \end{align*}
        Notice we can drop the absolute value because for any fixed configuration of the $Y_k$ the maximum will be attained when the difference is positive since the remaining part $f(s_1)+f(s_2)$ is invariant under the exchange of $s_1$ and $s_2$. In this way:
        \begin{align*}
            &\phantom{=}\mathbb{E}\sup_{s_1,s_2\in S}\left|\sum_k Y_k \phi(s_1)_k-\sum_k Y_k\phi(s_2)_k\right|+f(s_1)+f(s_2)\\
            &=\mathbb{E} \sup_{s_1\in S} \sum_k Y_k \phi(s_1)_k+f(s_1)+\mathbb{E}\sup_{s_2\in S}-\sum_k Y_k\phi(s_2)_k+f(s_2)\\
            &=2\left(\mathbb{E}\sup_{s\in S}\sum_k Y_k\phi(s)_k+f(s)\right).
        \end{align*}
        The last equality follows from the symmetry of the variables $Y_k$.
    \end{proof}

    The following is the main Theorem and corresponds to an extension of Maurer's work.

    \begin{theorem}[Vector contraction-inequality for $p-$stable variables]\label{p-theorem}
        Let $X$ be a nontrivial $p-$stable random variable for $1<p<2$. Then there exists a constant $C(p)<\infty$, depending only on $p$ and the distribution of $X$, such that for any set $S$ and functions $\psi_i:S\to \mathbb{R}, \phi_i:S\to\ell_p, (1\leq i\leq n)$ satisfying
        \[
            \forall s,s'\in S, \psi_i(s)-\psi_i(s')\leq \|\phi_i(s)-\phi_i(s')\|_p
        \]
        we have
        \[
            \mathbb{E}\sup_{s\in S}\sum_i\epsilon_i\psi_i(s)\leq C(p)\mathbb{E}\sup_{s\in S}\sum_{i,k}X_{ik}\phi_i(s)_k
        \]
        where the $X_{ik}$ are independent copies of $X$ for $1\leq i\leq n$ and $1\leq k\leq \infty$ and $\phi_i(s)_k$ is the $k-th$ coordinate of $\phi_i(s)$.
    \end{theorem}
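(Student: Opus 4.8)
The plan is to reduce the $n$-fold statement to the single-index Lemma by an inductive peeling argument: at each step I convert one Rademacher factor $\epsilon_i\psi_i$ into its $p$-stable surrogate $C(p)\sum_k X_{ik}\phi_i(s)_k$, while all remaining terms are absorbed into the auxiliary function $f$ of the Lemma. The essential observation, already visible in the proof of the Lemma, is that the constant $C(p)$ multiplies only the $\sum_k X_k\phi(s)_k$ term and not the carried-along function $f$; this is precisely what prevents the constant from compounding across the $n$ steps.

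Concretely, I would induct on the number $m$ of coordinates already converted, establishing for each $0\le m\le n$ the bound
\[
\mathbb{E}\sup_{s\in S}\sum_{i=1}^n\epsilon_i\psi_i(s) \le \mathbb{E}\sup_{s\in S}\left[C(p)\sum_{i=1}^m\sum_k X_{ik}\phi_i(s)_k + \sum_{i=m+1}^n\epsilon_i\psi_i(s)\right].
\]
The base case $m=0$ is an identity, and the case $m=n$ is the theorem once the common factor $C(p)$ is pulled outside the expectation. For the inductive step from $m-1$ to $m$, I would condition on all the variables $\{\epsilon_i\}_{i\ne m}$ and $\{X_{ik}\}_{i<m}$ and apply the Lemma to the single index $m$ with the assignments
\[
\psi = \psi_m, \qquad \phi = \phi_m, \qquad f(s) = C(p)\sum_{i=1}^{m-1}\sum_k X_{ik}\phi_i(s)_k + \sum_{i=m+1}^n\epsilon_i\psi_i(s).
\]
For each fixed configuration of the conditioned variables, $f$ is a genuine deterministic real-valued function on $S$, so the hypothesis $\psi_m(s)-\psi_m(s')\le\|\phi_m(s)-\phi_m(s')\|_p$ is exactly the one the Lemma requires; taking the conditional expectation over $\epsilon_m$ and $(X_{mk})_k$ and then the outer expectation over the frozen variables (Fubini/tower property) replaces $\epsilon_m\psi_m$ by $C(p)\sum_k X_{mk}\phi_m(s)_k$ and advances the induction.

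The main obstacle I anticipate is not algebraic but measure-theoretic: one must justify the conditional application of the Lemma rigorously, checking that for almost every configuration of the conditioned variables the function $f$ above is a legitimate input, that the relevant suprema are measurable and integrable so that the interchange of expectations is valid, and that freezing $\{X_{ik}\}_{i<m}$ inside $f$ while invoking the fresh independent copies $(X_{mk})_k$ is consistent with the joint independence of the doubly-indexed family $(X_{ik})$. Integrability of each surrogate term $\sum_k X_{ik}\phi_i(s)_k$ is supplied by Proposition \ref{prop}(i) through its $L_r$-convergence for $0<r<p$, and the symmetry of the $X_{ik}$ — already exploited in the Lemma to split the supremum over the pair $(s_1,s_2)$ — is what makes the carried terms behave correctly under the exchange of $s_1$ and $s_2$. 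Once these points are settled the induction closes, the constant remains $C(p)$ throughout, and factoring it out yields the stated inequality.
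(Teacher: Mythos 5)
Your proposal is correct and follows essentially the same route as the paper's own proof: the same induction on the number $m$ of converted indices, the same conditioning on $\{\epsilon_i, X_{ik} : i\neq m\}$, and the same application of the Lemma with the frozen terms playing the role of $f$ (the paper simply writes $Y_{ik}=C(p)X_{ik}$ where you carry the factor $C(p)$ explicitly). Your observation that $C(p)$ multiplies only the stable sum and not $f$, so the constant does not compound over the $n$ steps, is exactly the point on which the paper's argument rests.
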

    \begin{proof}
        The constant $C(p)$ and the $Y_k$ are chosen as in the previous Lemma, recall $Y_k=C(p)X_k$. By induction on $n$, we shall prove for all $m\in\{0,...,n\}$ we have
        \[
            \mathbb{E}\sup_{s\in S}\sum_{1\leq i\leq n}\epsilon_i\psi_{i}(s)\leq \mathbb{E}\left[\sup_{s\in S}\sum_{i:1\leq i \leq m}\sum_{k}Y_{ik}\phi_i(s)_k
            +\sum_{i: m<i\leq n}\epsilon_i\psi_i(s)\right].
        \]
        So the result will follow from the case $m=n$. Clearly the case $m=0$ is an identity. Assume that the claim holds for $m-1$. Denote
        $\mathbb{E}_m=\mathbb{E}[.|\{\epsilon_i,Y_{ik}:i\neq m\}]$ and define $f:S\to \mathbb{R}$ by
        \[
            f(s)=\sum_{i:1\leq i<m}\sum_k Y_{ik}\phi_i(s)_k+\sum_{i:m<i\leq n}\epsilon_i\psi_i(s).
        \]
        Then
        \begin{align*}
            \mathbb{E}\sup_{s\in S}\sum_{1\leq i\leq n}\sigma_i \psi_i(s)&\leq \mathbb{E}\left[\sup_{s\in S}\sum_{i:1\leq i<m}\sum_k Y_{ik}\phi_i(s)_k
            +\sum_{i:m\leq i\leq n}\epsilon_i\psi_i(s)\right]\\
            &=\mathbb{E}\hspace{0.15cm}\mathbb{E}_m\sup_{s\in S}(\epsilon_m\psi_m(s)+f(s))\\
            &\leq \mathbb{E}\hspace{0.15cm}\mathbb{E}_m \sup_{s\in S}\sum_k Y_{mk}\phi_m(s)_k+f(s)\\
            &=\mathbb{E} \sup_{s\in S} \sum_{i:1\leq i\leq m}\sum_k Y_{ik}\phi_i(s)_k+\sum_{i:m<i\leq n}\epsilon_i\psi_i(s)\\
            &=C(p)\mathbb{E} \sup_{s\in S} \sum_{i:1\leq i\leq m}\sum_k X_{ik}\phi_i(s)_k+\sum_{i:m<i\leq n}\epsilon_i\psi_i(s),
        \end{align*}
        where the first inequality is true by induction hypothesis and the second is due to the previous Lemma.
    \end{proof}

    It is attractive to conjecture the following inequality for $p-$norms:
    \begin{conjecture*}
        Let $\mathcal{X}$ be any set, $n\in\mathbb{N}, (x_1,...,x_n)\in \mathcal{X}^n$, let $\mathcal{F}$ be a class of functions $f:\mathcal{X}\to\ell_p, 0<p\leq 2$ and
        let $h:\ell_p\to\mathbb{R}$ have Lipschitz norm $L$. Then
        \[
            \mathbb{E}\sup_{f\in\mathcal{F}}\sum_i\epsilon_i h(f(x_i))\leq KL\mathbb{E}\sup_{f\in\mathcal{F}}\|\sum_i \epsilon_i f(x_i)\|_p,
        \]
        where $K$ is some universal constant.
    \end{conjecture*}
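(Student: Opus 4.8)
The plan is to route through the main theorem and then convert the resulting $p$-stable complexity into the Rademacher $\ell_p$-complexity on the right-hand side. First I would instantiate Theorem \ref{p-theorem} with $\psi_i=h\circ\mathrm{ev}_{x_i}$ and $\phi_i=L\,\mathrm{ev}_{x_i}$, so that the Lipschitz hypothesis $h(f(x_i))-h(f'(x_i))\le L\|f(x_i)-f'(x_i)\|_p$ matches the required contraction condition. This yields, for $1<p<2$,
\[
\mathbb{E}\sup_{f}\sum_i\epsilon_i h(f(x_i))\;\le\;C(p)\,L\,\mathbb{E}\sup_f\sum_{i,k}X_{ik}f_k(x_i),
\]
so the conjecture would follow from a comparison of the form $\mathbb{E}\sup_f\sum_{i,k}X_{ik}f_k(x_i)\le C'\,\mathbb{E}\sup_f\|\sum_i\epsilon_i f(x_i)\|_p$, giving $K=C(p)C'$.

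Second, I would prepare both sides for this comparison. On the left, since $X$ is symmetric and the $X_{ik}$ are independent, the array $(X_{ik})$ has the same law as $(\epsilon_i X_{ik})$ for an independent Rademacher sequence $(\epsilon_i)$, whence
\[
\mathbb{E}\sup_f\sum_{i,k}X_{ik}f_k(x_i)=\mathbb{E}_{\epsilon,X}\sup_f\sum_i\epsilon_i\Big(\sum_k X_{ik}f_k(x_i)\Big).
\]
On the right, the $p$-stability identity (taking $r=1$ in (\ref{p-stability})) gives, for a fresh singly-indexed copy $(X_k)$ and every fixed $w\in\ell_p$, the equality $\mathbb{E}_X|\sum_k w_kX_k|=c_p\|w\|_p$; applying it with $w=\sum_i\epsilon_i f(x_i)$ rewrites the target as
\[
c_p\,\mathbb{E}_\epsilon\sup_f\Big\|\sum_i\epsilon_i f(x_i)\Big\|_p=\mathbb{E}_\epsilon\sup_f\mathbb{E}_X\Big|\sum_i\epsilon_i\sum_k X_kf_k(x_i)\Big|.
\]
In this form the two quantities differ only in their use of the stable randomness: the left-hand side uses an \emph{independent} copy $X_{ik}$ in every row $i$, whereas the right-hand side uses a \emph{single} copy $X_k$ shared down each coordinate $k$, modulated by the signs $\epsilon_i$.

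The crux, and the step I expect to be the main obstacle, is precisely this comparison between ``fully independent'' and ``column-shared'' stable entries. The most promising attack uses the Gaussian scale-mixture representation valid for $p<2$, writing $X=\sqrt{W}\,g$ with $g$ standard Gaussian and $W$ an independent positive $(p/2)$-stable variable; conditionally on the scales $W$ both sides become Gaussian complexities, to which one may hope to apply Slepian/Gordon-type comparison inequalities before integrating out $W$. The difficulty is that the relevant comparison of covariance structures is not monotone in the naive direction, and the Gaussian comparison tools do not survive the $W$-averaging without extra control on the heavy-tailed scales; a decoupling argument that replaces the shared copy by independent ones is an alternative, but decoupling constants for $p$-stable chaos are delicate when $p<2$.

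Finally, several features of the statement appear genuinely out of reach of this method. The right-hand norm $\|\sum_i\epsilon_i f(x_i)\|_p$ does not split additively over $i$, so the inductive ``peel one index at a time'' argument driving Theorem \ref{p-theorem} cannot be reused directly; this is the structural reason the conjecture resists the paper's technique. The endpoints need separate treatment: for $0<p\le 1$ one has $\mathbb{E}|X|=\infty$, so the identity $\mathbb{E}_X|\sum_k w_kX_k|=c_p\|w\|_p$ fails and the whole stable route collapses, while $p=2$ reduces to the Gaussian case and should be recoverable from known vector-valued comparison results. Most severe is the requirement that $K$ be \emph{universal}: the constant $C(p)$ above blows up as $p\downarrow 1$ and as $p\uparrow 2$, so producing a single constant independent of $p$ and of the distribution of $X$ seems to demand an intrinsic argument that bypasses $p$-stable variables altogether.
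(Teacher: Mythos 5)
There is a fundamental problem with your proposal: you set out to \emph{prove} the statement, but the statement is false for $1<p\le 2$, and the paper's own treatment of this conjecture is a disproof by counterexample (adapting Maurer's $p=2$ example), not a proof. Take $\mathcal{X}=\ell_p$ with canonical basis $(e_i)$ and $x_i=e_i$, let $\mathcal{F}$ be the unit ball of $B(\ell_p)$ regarded as the functions $x\mapsto Tx$, and let $h(x)=\|x\|_p$, which has Lipschitz norm $L=1$. For each sign pattern $\epsilon=(\epsilon_i)$ define $T_\epsilon$ by $T_\epsilon e_i=e_i$ if $i\le n$ and $\epsilon_i=1$, and $T_\epsilon e_i=0$ otherwise; then $\|T_\epsilon\|_\infty\le 1$ and
\begin{align*}
\mathbb{E}\sup_{\|T\|_\infty\le 1}\sum_i\epsilon_i\|Tx_i\|_p &\ge \mathbb{E}\sum_i\epsilon_i\|T_\epsilon x_i\|_p=\mathbb{E}\,|\{i:\epsilon_i=1\}|=\frac{n}{2},\\
\mathbb{E}\sup_{\|T\|_\infty\le 1}\Big\|\sum_i\epsilon_i Tx_i\Big\|_p &= \mathbb{E}\sup_{\|T\|_\infty\le 1}\Big\|T\Big(\sum_i\epsilon_i e_i\Big)\Big\|_p\le\Big\|\sum_i\epsilon_i e_i\Big\|_p=n^{1/p},
\end{align*}
so the conjectured inequality would force $n/2\le K n^{1/p}$ for all $n$, which is absurd for every fixed $p>1$. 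Note that this kills the statement even if $K$ is allowed to depend on $p$; the universality of $K$, which you single out at the end as the most severe obstacle, is not the real issue. Your side remark that the case $p=2$ ``should be recoverable from known vector-valued comparison results'' is also wrong: $p=2$ is precisely Maurer's original counterexample.

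The same example shows that the ``crux'' comparison you reduce everything to, namely $\mathbb{E}\sup_f\sum_{i,k}X_{ik}f_k(x_i)\le C'\,\mathbb{E}\sup_f\|\sum_i\epsilon_i f(x_i)\|_p$, is not merely difficult but provably false for $1<p<2$: since Theorem \ref{p-theorem} is true, that comparison would imply the conjecture in this range, contradicting the display above (with $f$ ranging over the operator ball and $f_k(x_i)=(Te_i)_k$). Consequently no amount of Gaussian scale-mixture representation, Slepian/Gordon comparison, or decoupling can establish it. What your analysis does get right is the structural diagnosis --- the right-hand side $\|\sum_i\epsilon_i f(x_i)\|_p$ does not decompose additively over $i$, so the inductive peeling behind Theorem \ref{p-theorem} cannot reach it --- and the observation that the stable route degenerates for $0<p\le 1$, where $\mathbb{E}|X|=\infty$. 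Indeed, for $0<p\le 1$ the inequality $n/2\le Kn^{1/p}$ is not contradictory, and the paper leaves the conjecture open in that range, verifying it only for the special family of functions whose image lies in the unit sphere of $\ell_1$. The correct response to this statement is a counterexample; a proof attempt cannot succeed.
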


    Maurer stated this conjecture for $p=2$, and he showed it is false by giving a counter-example. A similar example also disproves this conjecture for $1<p<2$:
    Let $\mathcal{X}=\ell_p$ with the canonical basis $(e_i)$ and set $x_i=e_i$ for $1\leq i \leq n$. Let $F$ be the unit ball in the set of bounded operators $B(\ell_p), 0<p\leq 2$ and for $h$ consider $h:x\in\ell_p \to \|x\|_p$ which has Lipschitz norm equal to one. If the conjecture were true then there would be a universal constant $K$ such that
    \[
        \mathbb{E}\sup_{T\in B(\mathcal{H}):\|T\|_{\infty}\leq 1}\sum_i\epsilon_i\|Tx_i\| \leq K \mathbb{E} \sup_{T\in B(\mathcal{H}):\|T\|_{\infty}\leq 1}
        \|\sum_i \epsilon_i Tx_i\|_p.
    \]
    For any Rademacher sequence $\epsilon=(\epsilon_i)$, let $T_{\epsilon}$ be the operator defined by $T_{\epsilon}e_i=e_i$ if $i\leq n$ and $\epsilon_i=1$ and $T_{\epsilon}=0$ otherwise. It follows $\|T_{\epsilon}\|_{\infty}\leq 1$. Then
    \[
        \frac{n}{2}=\mathbb{E}|\{ i: \epsilon_i=1\}|=\mathbb{E}\sum_i \epsilon_i\|T_{\epsilon}x_i\| \leq \mathbb{E}\sup_{T\in B(\mathcal{H}):\|T\|_{\infty}\leq 1} \sum_i\epsilon_i\|Tx_i\|.
    \]
    On the other hand
    \[
        \mathbb{E}\sup_{T\in B(\mathcal{H}):\|T\|_{\infty}\leq 1}\|\sum_i \epsilon_i Tx_i\|_p\leq \mathbb{E}\|\sum_i \epsilon_i e_i\|_p \leq \sqrt[p]{n}.
    \]
    Thus if the conjecture were true we would obtain $n/2\leq K\sqrt[p]{n}$ for some universal constant $K$. Note this is absurd when $1<p\leq 2$ but otherwise the inequality makes sense.

    It is interesting to point out the following: The range of exponents for which the conjecture makes sense coincides with the ranges of exponents of the vector contraction inequality for $p-$stable random variables (Theorem \ref{p-theorem}). The fact that Theorem \ref{p-theorem} can't be extended beyond $p\leq1$ is because of a technicality in Proposition \ref{prop}, so we know $p=1$ is an exponent to look at it. It would be interesting to know if the conjecture is true or not for the case $p=1$.
    We have explored the following case:

    For $p=1$, let $\mathcal{X}$ be any set, $n\in\mathbb{N}, (x_1,...,x_n)\in \mathcal{X}^n$, let $\mathcal{F}$ be a collection of functions whose image is the unit sphere of $\ell_1$, that is, $\mathcal{F}=\{f|f:\mathcal{X}\to S_{\ell_1}\}$ and let $h:\ell_1 \to \mathbb{R}$ defined by $x\mapsto \|x\|_1$.
    On the left hand side of the conjecture we have
    \begin{align*}
        \mathbb{E}\sup_{f\in\mathcal{F}}\sum_i\epsilon_i h(f(x_i)) &= \mathbb{E}\sup_{f\in\mathcal{F}}\sum_i\epsilon_i \|f(x_i)\|_1\\
         &=\mathbb{E}\sum_i\epsilon_i =0.
    \end{align*}
    On the right side we obtain
    \begin{align*}
        \mathbb{E}\sup_{f\in\mathcal{F}}\|\sum_i \epsilon_i f(x_i)\|_1 \leq \mathbb{E}\sup_{f\in\mathcal{F}}\|\sum_i \epsilon_i e_i\|_1 = n.
    \end{align*}
    Thus, the conjecture is true for this particular family of functions. Whether the conjecture is true or not for the family of functions whose image is bounded might be a case worth trying.

\end{document}